\newtheorem{theo}{Theorem}[section]
\newtheorem{prop}[theo]{Proposition}
\newtheorem{lem}[theo]{Lemma}
\newtheorem{cor}[theo]{Corollary}
\newtheorem{defi}[theo]{Definition}
\newtheorem{rem}[theo]{Remark}
\newtheorem{conj}[theo]{Conjecture}
\newtheorem{theon}{Theorem}
\newtheorem{corn}[theon]{Corollary}
\def \fppf {{\rm fppf}}
\def \Cl {{\rm Cl}}
\def \dR {{\rm dR}}
\def \cris {{\rm cris}}
\def \s {{\rm s}}
\def \p {{\rm p}}
\def \Br {{\rm{Br}}}
\def \Ga {{\Gamma}}
\def \Pic {{\rm {Pic}}}
\def \Gal {{\rm{Gal}}}
\def \dim {{\rm{dim}}}
\def \End {{\rm {End}}}
\def\ov{\overline}
\def \Z {{\mathbb Z}}
\def \Q {{\mathbb Q}}
\def \F {{\mathbb F}}
\def\G{{\mathbb G}}
\def\lra{\longrightarrow}
\def\H{{\rm H}}
\def\Zar{{\rm Zar}}
\def\X{{\cal X}}
\def\NS{{\rm NS\,}}
\def\O{{\cal O}}
\def\Ga{\Gamma}
\def\et{{\rm{\acute et}}}
\newcommand{\bthe}{\begin{theo}}
\newcommand{\ble}{\begin{lem}}
\newcommand{\bpr}{\begin{prop}}
\newcommand{\bco}{\begin{cor}}
\newcommand{\bde}{\begin{defi}}
\newcommand{\brem}{\begin{rem}}
\newcommand{\bconj}{\begin{conj}}
\newcommand{\ethe}{\end{theo}}
\newcommand{\ele}{\end{lem}}
\newcommand{\epr}{\end{prop}}
\newcommand{\eco}{\end{cor}}
\newcommand{\ede}{\end{defi}}
\newcommand{\erem}{\end{rem}}
\newcommand{\econj}{\end{conj}}
\newcommand{\isomto}{\overset{\cong}{\longrightarrow}}
\title{Boundedness of the $p$-primary torsion of the Brauer groups of K3 surfaces}
\author{Christopher D.~Lazda\footnote{Department of Mathematics, Harrison Building, University of Exeter, EX4 4QF, United Kingdom. Email: \url{c.d.lazda@exeter.ac.uk}}\; and Alexei N.~Skorobogatov\footnote{Department of Mathematics, South Kensington Campus, Imperial College London, SW7 2AZ, United Kingdom
 \  \ and \  \ 
Institute for the Information Transmission Problems,
Russian Academy of Sciences,
Moscow, 127994, Russia. Email: \url{a.skorobogatov@imperial.ac.uk}}}
\date{\today}
\begin{document}
\maketitle

\begin{abstract}
\noindent We prove that the transcendental Brauer group of a K3 surface $X$ over a finitely generated field $k$
is finite, unless $k$ has positive characteristic $p$ and $X$ is supersingular, in which case it is 
annihilated by $p$.
\end{abstract}

\section*{Introduction}

Let $k$ be a field, and $X$ a smooth, projective, geometrically connected variety over $k$. If $\bar k$ is an algebraic closure of $k$, and $\overline{X}$ is the base change of $X$ to $\bar{k}$, we let
\[ \Br(\overline{X})^k:={\rm Im}[\Br(X)\to\Br(\overline{X})]\]
denote the transcendental Brauer group of $X$. If $k$ is a finitely generated field of characteristic $0$, and $X$ is a K3 surface, it was proved in \cite{SZ08} that this group is finite. Similarly, if $k$ is finitely generated of characteristic $p>0$, and $X$ is again a K3 surface, then it was proved in \cite{SZ15} (for odd $p$) and \cite{Ito18} (for $p=2$) that the prime-to-$p$ torsion subgroup $\Br(\overline{X})^k(p')\subset \Br(\overline{X})^k$ is finite.

In general, one cannot expect the $p$-primary torsion subgroup $\Br(\overline{X})^k\{p\}\subset {\Br}(\overline{X})^k$ to be finite, see \cite[\S 4, Example 1]{Sk}. However, one might still ask if this group has finite exponent, that is, if it is annihilated by some fixed power of $p$. Moreover, the examples where $\Br(\overline{X})^k\{p\}$ is infinite are all supersingular, so one might ask whether $\Br(\overline{X})^k\{p\}$ is finite whenever $X$ is of finite height. Inspired by the analogous result for abelian varieties proved in \cite{D'A}, our main goal in this article is to give an affirmative answer to these questions.

\begin{theon} \label{theo: main} Suppose that $k$ is finitely generated of characteristic $p>0$, and that $X$ is a K3 surface.
\begin{enumerate} \item[{\rm (1)}] \label{num: 1} If $X$ is of finite height $h$, then $\Br(\overline{X})^k\{p\}$ is a finite abelian group with at most $22-2h-\rho$ generators, 
where $\rho$ is the Picard number of $\ov X$.
\item[{\rm (2)}] \label{num: 2} If $X$ is supersingular, then $\Br(\overline{X})\cong (\bar k,+)$ is 
annihilated by $p$. 
\end{enumerate}
In particular, $\Br(\overline{X})^k\{p\}$ is of finite exponent.
\end{theon}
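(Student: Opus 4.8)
I would handle the two cases separately; essentially all the difficulty is in the finite-height case. For part (2): a supersingular K3 surface has $\rho = 22$, so the $\ell$-adic Kummer sequence gives $\Br(\overline X)\{\ell\} \cong (\Q_\ell/\Z_\ell)^{22-\rho} = 0$ for every $\ell \ne p$, hence $\Br(\overline X) = \Br(\overline X)\{p\}$, and it remains to identify this with $\overline k$. For that I would use that the formal Brauer group of a supersingular K3 surface is $\widehat{\G}_a$ (Artin) and compute $H^2_\fppf(\overline X, \G_m)\{p\}$ directly, using also $H^1(\overline X,\O)=0$ and $H^2(\overline X,\O)=\overline k$; this identifies $\Br(\overline X)$ with the additive group of $\overline k$, which as an abelian group is an $\F_p$-vector space and so is killed by $p$.

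For part (1), the first step is to pin down $\Br(\overline X)\{p\}$ over the algebraic closure. Taking the Kummer sequence for $\mu_{p^n}$ in the fppf topology and passing to the inverse limit yields $0 \to \NS(\overline X)\otimes\Z_p \to \varprojlim_n H^2_\fppf(\overline X,\mu_{p^n}) \to T_p\Br(\overline X) \to 0$, and by Illusie's comparison of flat and crystalline cohomology the middle term is the slope-$1$ part of $H^2_\cris(X/W)$ — concretely $H^1_\et(\overline X, W\Omega^1_{\log})$ — which is free of rank $22-2h$, since the Newton polygon of the K3 crystal of a finite-height surface has slopes $1-\tfrac1h$, $1$, $1+\tfrac1h$ with multiplicities $h$, $22-2h$, $h$. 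Using torsion-freeness of $H^2_\cris$ for K3 surfaces together with the fact that $\NS(\overline X)\otimes\Z_p$ is saturated in the slope-$1$ part in the finite-height case — equivalently, that $\Br(\overline X)\{p\}$ is divisible — one obtains $\Br(\overline X)\{p\} \cong (\Q_p/\Z_p)^{22-2h-\rho}$. In particular every finite subgroup is generated by at most $22-2h-\rho$ elements, so the generator bound in (1) is automatic once finiteness is established.

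The heart of the matter is therefore the finiteness of $\Br(\overline X)^k\{p\}$. One cannot argue as in characteristic zero by bounding $(\Br(\overline X)\{p\})^{\Gamma_k}$, because the Galois (and flat) cohomology of $k$ with $p$-torsion coefficients is typically infinite in characteristic $p$. Instead I would spread $X$ out to a smooth proper morphism $\pi\colon\X\to S$ with $S$ smooth of finite type over a finite field $\F_q$ and function field $k$, and study the relative picture: the transcendental quotient $\mathcal M$ of $R^2\pi_{\cris,*}\O_{\X/W}$ is an overconvergent $F$-isocrystal on $S$ of rank $22-\rho$, and the part governing the $p$-primary Brauer group is the slope-$1$ piece of $\mathcal M$ modulo the constant sub-object spanned by $\NS(\overline X)$. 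The essential input is that this transcendental slope-$1$ isocrystal contains no sub-object isomorphic to a Tate twist of the unit isocrystal — no ``crystalline Tate class'' — which is precisely the crystalline Tate conjecture for $X$ over $k$, available for K3 surfaces in characteristic $p$ through the Kuga--Satake construction and the Tate conjecture for abelian varieties over finitely generated fields. Granting this, finiteness of the $k$-rational part of $\Br(\overline X)\{p\}$ follows from a weight argument over the finite base: passing to global sections over $S$ amounts to taking geometric-monodromy invariants and then Frobenius invariants, and since the transcendental slope-$1$ part, Tate-twisted to weight $0$, has no Frobenius eigenvalue equal to $1$ (again by the Tate conjecture over $\overline{\F}_q$), the operator $1-\Frob$ is invertible on the associated $\Q_p$-vector space, so it annihilates only a finite subgroup of $(\Q_p/\Z_p)^{22-2h-\rho}$.

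The hard part, and the place where positive characteristic genuinely bites, will be making this last step precise: the $p$-adic realization of $\Br(\overline X)\{p\}$ is not an étale sheaf, so the $\ell$-adic template ``spread out, control geometric monodromy, apply weights'' has to be rebuilt out of the logarithmic de Rham--Witt sheaves $W_n\Omega^1_{\X,\log}$ (which are honest étale sheaves) on the one side and the theory of overconvergent $F$-isocrystals with their Frobenius structure on the other, glued together so that the height $h$, the slope filtration and the Frobenius eigenvalues can all be read off at once. A secondary difficulty is the structural computation in the finite-height case — in particular the divisibility of $\Br(\overline X)\{p\}$, i.e.\ the saturatedness of $\NS(\overline X)\otimes\Z_p$ in the slope-$1$ part of $H^2_\cris$ — which is what makes the exponent and generator count clean. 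Once both are in place, part (1) yields finiteness together with the bound $22-2h-\rho$, while part (2) yields annihilation by $p$, and the two together give that $\Br(\overline X)^k\{p\}$ is always of finite exponent.
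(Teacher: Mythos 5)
Your reduction of the problem is essentially the paper's: part (2) via the Tate conjecture and the identification $\Br(\overline{X})\cong\bar k$; for part (1), the computation $\Br(\overline{X})\{p\}\cong(\Q_p/\Z_p)^{\oplus(22-2h-\rho)}$ from the slope spectral sequence and Illusie's flat--crystalline comparison, so that everything comes down to killing the $p$-adic Tate module of the transcendental part; and finally Kuga--Satake plus the Tate conjecture for abelian varieties as the engine. The generator bound and the ``finite exponent'' conclusion do follow formally once finiteness is in hand, as you say.

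The gap is that the step you defer as ``the hard part'' is not a technicality to be rebuilt from the $\ell$-adic template --- it is the actual content of the proof, and your sketch of it has two unresolved points. First, you invoke ``the crystalline Tate conjecture for $X$ over $k$'' as available from Kuga--Satake; but the crystalline compatibility of the Kuga--Satake construction is only established in the literature over perfect fields (i.e.\ over $W(k^{\rm p})$), and descending it to an embedding of $F$-(iso)crystals over $k$ itself --- equivalently, over your spreading-out $S$, carrying the Gauss--Manin connection --- is a genuine theorem one must prove (and is delicate for $p=2$, where only the de\thinspace Rham/\'etale statements are in \cite{KMP16}). Without that descent, de Jong's theorem on Barsotti--Tate groups over finitely generated fields, which is what converts ``special endomorphism of the $p$-divisible group'' into ``endomorphism of $A$ over $k$,'' cannot be applied. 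Second, and more seriously, your weight argument presupposes that the image of $\Br(X)\{p\}$ in $\Br(\overline{X})\{p\}$ is controlled by Frobenius/monodromy invariants of the slope-one isocrystal; but classes in $\H^2_\fppf(X,\mu_{p^n})$ over the non-perfect field $k$ are not a priori visible to the isocrystal structure at all. One must show that the composite $\H^2_\fppf(X,\mu_{p^n})\to\H^2_\cris(\overline{X}/W_n(\bar k))$ lands in the horizontal ($\nabla=0$) part of $\H^2_\cris(X/C_n)$ for a Cohen ring $C$ of $k$ --- which requires factoring the ${\rm d}\!\log$ map through absolute crystalline cohomology $\H^2_\cris(X/W_n(k_0))$ and comparing crystalline with de\thinspace Rham--Witt cohomology over the non-perfect $k$. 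Only after that does ``no Frobenius eigenvalue $1$ on the transcendental part'' (itself a consequence of, not an input to, the de Jong step) say anything about the image of $\Br(X)$. Your plan names the right objects ($W_n\Omega^1_{\log}$ on one side, $F$-isocrystals on the other) but the glue between them is exactly what is missing.
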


Note than when $k$ is a finite field of characteristic $\neq2$, a more refined version of this result can be found in \cite{Mil75}.

In fact, modulo the Tate conjecture, (2) is well-known, and essentially goes back to Artin \cite{Art74b}. All we do here is observe that the Tate conjecture is now known for K3 surfaces, thanks to work of Charles \cite{Cha13}, Maulik \cite{Mau14}, Madapusi Pera \cite{MP15}, Kim--Madapusi Pera \cite{KMP16} and Ito--Ito--Koshikawa \cite{IIK21}. Our proof of (1) is also based on the proof of the Tate conjecture for K3 surfaces given in \cite{MP15} and \cite{KMP16}, and in particular the arithmetic properties of the Kuga--Satake construction proved there. Our main observation is that the crystalline incarnation of Kuga--Satake descends from the perfection of $k$ to give a map of $F$-isocrystals over $k$ itself (see Theorem \ref{theo: KS} below). Ultimately, this allows us to reduce Theorem \ref{theo: main} to the crystalline Tate conjecture for abelian varieties proved by de\thinspace Jong in \cite{dJ98}. 

Combined with the above-mentioned results on the prime-to-$p$ torsion, we obtain the following 
corollary. Recall that the {\em characteristic exponent} of $k$ is 1 if ${\rm char}(k)=0$,
and $p$ if ${\rm char}(k)=p>0$.

\medskip

\begin{corn} Suppose that $k$ is finitely generated of characteristic
exponent $p$, and that $X$ is a K3 surface. Then $\Br(\overline{X})^k$ is finite unless $p>1$ and $X$ is supersingular, in which case $\Br(\overline{X})^k$ is annihilated by $p$.
\end{corn}

\noindent{\bf Question.} Suppose that $k$ is an {\em infinite} finitely generated field
of characteristic $p>0$, and that $X$ is a {\em supersingular} K3 surface. Is the group $\Br(\overline{X})^k$
infinite?

\medskip

The answer to this question is positive when $X$ is the Kummer surface attached to 
the self-product of a supersingular elliptic curve, and $p\neq 2$ \cite[\S 4, Example 1]{Sk}. In general,
for an infinite field $k$, the group $\Br(\overline{X})^k$ becomes infinite
after replacing $k$ by a finite extension (see Remark \ref{rem} below).

\medskip

Let $k^\s$ be the separable closure of $k$ in $\bar k$, and let $\Ga=\Gal(k^\s/k)$.
Write $X^\s=X\times_k k^\s$.
The natural map $\Br(X^\s)\to\Br(\overline{X})$ is injective because 
the Picard scheme of a K3 surface is smooth, see \cite[Theorem 5.2.5 (ii)]{CTS21}, \cite[Corollary 3.4]{D'A}.
It is known that the cokernel of the natural map $\Br(X)\to\Br(X^\s)^\Ga$ is the direct sum of a
finite group and a $p$-torsion group of finite exponent \cite[Theorem 5.4.12]{CTS21}.
Thus we obtain the following statement.

\begin{corn} Suppose that $k$ is finitely generated of characteristic
exponent $p$, and that $X$ is a K3 surface.
Then $\Br(X^\s)^\Ga$ is the direct sum of a
finite group and a $p$-torsion group of finite exponent.
\end{corn}

When the first version of this paper was written, the authors became aware of the preprint
of Z.~Li and Y.~Qin that gives a different proof of Corollary B. Their result applies to arbitrary smooth
projective varieties for which the Tate conjecture for divisors is known, see 
\cite[Theorem 1.1, Corollary 1.2]{LQ24}.

The authors are very grateful to the referees for their careful reading of the paper, for their 
suggestions, and for spotting a gap in the first version of the paper.

\subsection*{Notation and conventions}

\begin{itemize}
\item For a given field $k$ of characteristic $p>0$, we will let $\bar k$ be an algebraic closure of $k$, $k^\s$ the separable closure of $k$ in $\bar k$,
and $k^\p$ the perfect closure of $k$ in $\bar k$ 
(the maximal inseparable extension of $k$ in $\bar k$). We write $\Ga=\Gal(k^\s/k)$.
\item A variety over $k$ will mean a separated $k$-scheme of finite type.
For a variety $X$ over $k$ we will write $X^\s=X\times_k k^\s$, $X^\p:=X\times_k k^\p$ and $\overline{X}:=X\times_k \bar{k}$.
\item A subset $\{x_i\}\subset k$ is called a $p$-{\em basis} of $k$ if the monomials $\prod x_i^{e_i}$,
where $0\leq e_i\leq p-1$, form a basis of $k$ as a vector space over $k^p\subset  k$, see
\cite[\href{https://stacks.math.columbia.edu/tag/07P1}{Definition 07P1}]{stacks-project}. Any field of characteristic $p$ admits a $p$-basis.
\item Unadorned tensor products will be over $\Z$.
\item For any scheme $S$ we set
$$ \H^i_\fppf(S,\Z_p(1)) := \lim_n \H^i_\fppf(S,\mu_{p^n}). $$
We have followed the definition given in \cite[Chapitre II, \S5]{Ill} in the case when $S$ is smooth and proper over an algebraically closed field. Outside this case, the above definition is not really the `correct' one,\footnote{The `correct' definition is $\H^i({\bf R}\!\lim_n {\bf R}\Gamma(S,\mu_{p^n}))$, which always maps to the above group, and is isomorphic to it if $\lim_n^{(1)}\H^{i-1}(S,\mu_{p^n})=0$.} however, it will be good enough for our purposes here.
\item For any abelian group $A$ we denote by $A\{p\}$ its $p$-primary torsion subgroup, and by $A(p')$ its prime-to-$p$ torsion subgroup. 
\end{itemize}

\section{The geometric Brauer group}

Let $k$ be a field of characteristic $p$, and $X$ a smooth and proper variety over $k$. 
We denote by $\rho=\dim_\Q(\NS(\overline{X})\otimes\Q)$ the (geometric) Picard number of $X$.
Let $r$ be the dimension of the $\Q_p$-vector space 
$(\H^2_\cris(\overline{X}/W({\bar k}))\otimes \Q)^{F=p}$.
Using results of  Illusie, it is shown in \cite[Theorem A.1]{Pet24} that there is
an isomorphism $\Br(\overline{X})\{p\}\cong (\Q_p/\Z_p)^{\oplus(r-\rho)}\oplus B$,
where $B$ is annihilated by a power of $p$. For K3 surfaces, the following
precise result is known (see for example \S2.G of \cite{BY23}).

\bthe Let $X$ be a K3 surface over $k$. If $X$ has finite height $h$, then 
$\Br(\overline{X})\{p\}\cong (\Q_p/\Z_p)^{\oplus(22-2h-\rho)}$. If $X$ is supersingular, then $\Br(\overline{X})\cong\bar k$. 
\ethe

\begin{proof} We include a proof for the reader's convenience. We will assume that $k=\bar{k}$ is algebraically closed (thus $X=\overline{X}$), and we set $W=W({\bar k})$. 

For any smooth and proper variety $X$ over $k$ and any
$i,j\geq 0$ we have a short exact sequence \cite[Corollaire IV.3.5 (a)]{IR83}
$$
0\to \H^j(X,W\Omega^i_{X,{\rm log}})\to \H^j_{\Zar}(X,W\Omega^i_X)\xrightarrow{1-F}\H^j_{\Zar}(X,W\Omega^i_X)\to 0,
$$
where $W\Omega^i_X$ is the sheaf of de Rham--Witt differential forms
with its semi-linear Frobenius $F$ \cite[I.2.E]{Ill}. By definition,
$$\H^j(X,W\Omega^i_{X,{\rm log}}):=\varprojlim \H^j_\Zar(X,W_n\Omega^i_{X,{\rm log}}),$$
where $W_n\Omega^i_{X,{\rm log}}$ is a subsheaf of `logarithmic' forms in $W_n\Omega^i_{X}$.
In view of a canonical isomorphism 
$\H^i(X,\Z_p(1))\cong \H^{i-1}(X,W\Omega^1_{X,{\rm log}})$  \cite[\S II.5]{Ill},
we obtain an isomorphism
$$\H^i(X,\Z_p(1))\cong \H^{i-1}(X,W\Omega^1_{X})^{F=1}.$$

We also have an exact sequence
\begin{equation}\label{eqn: flat mupn} 0 \to \H^2_\fppf(X,\Z_p(1))/p^n \to\H^2_\fppf(X,\mu_{p^n}) \to \H^3_\fppf(X,\Z_p(1))[p^n] \to 0 \end{equation}
together with the exact sequence
\begin{equation} \label{eqn: kummern} 0 \to {\rm Pic}(X)/p^n\to \H^2_\fppf(X,\mu_{p^n}) \to  \Br(X)[p^n]\to 0 \end{equation}
deduced from the Kummer exact sequence in the fppf topology. 

Now let $X$ be a K3 surface. As explained in \cite[Theorem 4.8]{Lie16}, $X$ is Shioda supersingular (that is, $\rho=22$)
if and only if it is Artin supersingular (that is, the $F$-crystal $\H^2_\cris(X/W)$ is purely of slope 1), thanks to the Tate conjecture for K3 surfaces \cite{Cha13,Mau14,MP15,KMP16,IIK21}. (Note that while \cite[Theorem 4.8]{Lie16} is stated in odd characteristic, this restriction comes from the fact that the Tate conjecture for K3 surfaces was only known in odd characteristic when \emph{loc. cit.}~was written. Given the Tate conjecture for K3 surfaces in characteristic $2$, the proof of \cite[Theorem 4.8]{Lie16} works verbatim.)

If $X$ is supersingular, then the map ${\rm Pic}(X)\otimes \Z_p \to \H^2_\fppf(X,\Z_p(1))$ is an isomorphism by \cite[Corollaries 1.6 and 1.7]{Ogu79} (for the missing piece of the latter, see \cite[Chapitre II, Th\'eor\`eme 5.14]{Ill}). We therefore deduce an isomorphism
\[ \Br(X)[p^n] \stackrel{\sim}\lra \H^3_\fppf(X,\Z_p(1))[p^n]\cong \bar k, \]
see \cite[Chapitre II, \S7.2(b)]{Ill} for the second isomorphism.

An entirely similar argument using the $\ell$-adic Kummer sequence shows that $\Br(X)\{\ell\}=0$ for all $\ell\neq p$, and hence $\Br(X)$ is $p$-torsion.

Now suppose that $X$ has finite height $h$. Then $\H^2(X,W\O_{X})$ is finitely generated 
over $W$, since it is the Dieudonn\'e module of the $p$-divisible group ${\widehat{\Br}}(X)$ \cite[Corollary 4.3]{AM77}. In particular, the slope spectral sequence
\[ E_1^{i,j}= \H^j(X,W\Omega^i_{X})\implies \H^{i+j}(X,W\Omega^\bullet_{X})  \]
degenerates at $E_1$ by \cite[Chapitre II, Corollaire 3.14]{Ill}, and thus each $\H^j(X,W\Omega^i_{X})$ is finite free over $W$ since each $\H^{i+j}(X,W\Omega^\bullet_{X})$ is. It then follows that all $\H^i_\fppf(X,\Z_p(1))$
are finite free over $\Z_p$ by the argument of \cite[Lemma A.2]{Pet24}. 
In particular, we deduce from \eqref{eqn: flat mupn} that
\[ \underset{n}{\rm colim}\;\H^2(X,\mu_{p^n}) \isomto \H^2(X,\Z_p(1))\otimes_{\Z_p} \Q_p/\Z_p \]
is of the form $(\Q_p/\Z_p)^{\oplus n}$ for some $n\geq0$. 
By \cite[Th\'eor\`eme II.5.5 (5.5.3)]{Ill} we have an isomorphism of $\Q_p$-vector spaces
$$\H^i(X,\Z_p(1))\otimes\Q\cong(\H^i_\cris(X/W)\otimes \Q)^{F=p}.$$
Thus $n$ is the multiplicity of the slope 1 part of $\H^2_\cris(X/W)$, so that 
$n=22-2h$, cf.~\cite[Chapitre II, \S7.2(a)]{Ill}. 
The Kummer exact sequences \eqref{eqn: kummern} then fit together to give a surjection
\[ \underset{n}{\rm colim}\; \H^2(X,\mu_{p^n})  \twoheadrightarrow \Br(X)\{p\} \] with kernel $(\Q_p/\Z_p)^{\oplus \rho}$.
Thus there is an exact sequence
\[ 0 \to (\Q_p/\Z_p)^{\oplus \rho} \overset{\alpha}{\to} (\Q_p/\Z_p)^{\oplus(22-2h)}\to \Br(X)\{p\} \to 0.  \]
The Pontryagin dual 
of $\alpha$ is a surjective map from $\Z_p^{\oplus(22-2h)}$ to $\Z_p^{\oplus \rho}$, whose kernel is therefore isomorphic to $\Z_p^{\oplus(22-2h-\rho)}$. By double duality, we see that
\[ \Br(X)\{p\}\cong {\rm Hom}(\Z_p^{\oplus(22-2h-\rho)},\Q_p/\Z_p)=(\Q_p/\Z_p)^{\oplus(22-2h-\rho)},\]
as required.
\end{proof}

This immediately proves Theorem \ref{theo: main} in the supersingular case. 

To show that Theorem \ref{theo: main} holds in the finite height case, it is enough to show that the $p$-adic Tate module $T_p(\Br(\overline{X})^k)$ is zero. If $k'/k$ is a finite field
extension, then the obvious injection $\Br(\overline{X})^k\to \Br(\overline{X})^{k'}$ induces an injection $T_p(\Br(\overline{X})^k)\to T_p(\Br(\overline{X})^{k'})$. Thus, we are free to replace $k$ by a finite extension, so we can assume that $X(k)\neq\emptyset$ and that ${\rm Pic}(X)={\rm Pic}(\overline{X})$.

The exactness of the Kummer sequence in the fppf topology
gives rise to a commutative diagram
\begin{equation}\begin{split}
\label{eqn: kummer} \xymatrix{0\ar[r]&\Pic(\overline{X})\otimes\Z_p\ar[r]&\H^2_\fppf(\overline{X},\Z_p(1))\ar[r]&
T_p(\Br(\overline{X}))\ar[r]&0\\
0\ar[r]&\Pic(X)\otimes\Z_p\ar[r]\ar[u]&\H^2_\fppf(X,\Z_p(1))\ar[r]\ar[u]&
T_p(\Br(X))\ar[r]\ar[u]&0}
\end{split}
\end{equation}
with exact rows. We let $T_p(\Br(\overline{X}))^k$ denote the image of the right hand vertical map of \eqref{eqn: kummer}.

\ble
Suppose $X(k)\neq \emptyset$ and $\Pic(X)=\Pic(\overline{X})$. Then 
the natural map $T_p(\Br(\overline{X}))^k\to T_p(\Br(\overline{X})^k)$ is an isomorphism.
\ele

\begin{proof} The assumption implies that $\H^1(k,\Pic(X^\s))=0$.
The map $\Br(X^\s)\to\Br(\overline{X})$ is injective because the Picard scheme of a K3 surface is 
smooth, see \cite[Theorem 5.2.5 (ii)]{CTS21}, \cite[Corollary 3.4]{D'A}.
Thus, the Hochschild--Serre spectral sequence gives rise to an exact sequence
$$0\to\Br(k)\to \Br(X)\to \Br(\overline{X})^k\to 0$$
which is split by the choice of a $k$-point.
Since the sequence is split, we see that $T_p(\Br(X))\to T_p(\Br(\overline{X})^k)$ is surjective. Since $T_p(\Br(\overline{X})^k)\to T_p(\Br(\ov X))$ is injective, 
we deduce that $T_p(\Br(\overline{X}))^k= T_p(\Br(\overline{X})^k)$ as required.
\end{proof}

\begin{rem} {\rm D'Addezio showed that
$T_p(\Br(\overline{A})^k)=0$ for any abelian variety $A$ over $k$ \cite[Theorem 5.1]{D'A},
by deducing this from $T_p(\Br(\overline{A}))^k=0$. See \cite[Theorem 3.2]{Sk} for a different proof.}
\end{rem}

We now let $\H^2_\fppf(\overline{X},\Z_p(1))^k$ denote the image of
\[ \H^2_\fppf(X,\Z_p(1))\to \H^2_\fppf(\overline{X},\Z_p(1)). \]
If ${\rm Pic}(X)={\rm Pic}(\overline{X})$, we deduce from \eqref{eqn: kummer} an exact sequence
\begin{equation} \label{eqn: kummer2} 0\to\Pic(X)\otimes\Z_p\to\H^2_\fppf(\overline{X},\Z_p(1))^k\to T_p(\Br(\overline{X}))^k\to 0. \end{equation}
If, in addition, we have $X(k)\neq\emptyset$, then the group appearing on the right can be replaced by $T_p(\Br(\overline{X})^k)$.

\begin{rem} \label{rem}
{\rm In this remark, we explain why ${\rm Br}(\overline{X})^k$ is potentially infinite whenever $X$ is supersingular, and $k$ is infinite. First, we assume that $X(k)\neq\emptyset$ and ${\rm Pic}(\overline{X})={\rm Pic}(X)$. This implies that ${\rm Br}(X)\cong {\rm Br}(k)
\oplus {\rm Br}(\overline{X})^{k}$, so it is enough to show that ${\rm Br}(X)[p]/{\rm Br}(k)[p]$ is potentially infinite. We next consider the exact sequence
\[ 0\to {\rm Pic}(X)/p \to \H^2_\fppf(X,\mu_p) \to {\rm Br}(X)[p]\to 0. \]
Since ${\rm Pic}(X)/p$ is finite, it is enough to show that $\H^2_\fppf(X,\mu_p)/\H^2_\fppf(k,\mu_p)$ is potentially infinite. We now consider the Leray spectral sequence for the morphism
\[ f\colon X_{\rm fppf}\to {\rm Spec}(k)_{\rm fppf} \]
of flat sites. Note that $f_*\mu_p=\mu_p$, and that ${\bf R}^1f_*\mu_p$ is represented by the $p$-torsion in the Picard scheme of $X$, which is trivial since $X$ is a K3 surface. The existence of a $k$-point means that all maps $\H_\fppf^q(k,\mu_p)\to \H_\fppf^q(X,\mu_p)$ are injective, in particular the differential
\[ \H^0_\fppf(k,{\bf R}^2f_*\mu_p)\to \H_\fppf^3(k,\mu_p) \]
has to be zero, and we deduce an exact sequence
\[ 0 \to \H^2_\fppf(k,\mu_p) \to \H^2_\fppf(X,\mu_p) \to \H^0_\fppf(k,{\bf R}^2f_*\mu_p)\to 0. \]
Now, it is shown in \cite[Proposition 2.2.4]{BL18} that, at least over $\bar{k}$, the sheaf ${\bf R}^2f_*\mu_p$ is represented by a smooth group scheme with connected component isomorphic to $\G_a$. Since ${\bf R}^2f_*\mu_p$ is representable by \cite[Corollary 1.6]{BO21}, it follows that, after making a finite extension of $k$, there exists an embedding $\G_a\hookrightarrow {\bf R}^2f_*\mu_p$, in which case $\H^0_\fppf(k,{\bf R}^2f_*\mu_p)$ must be infinite.}
\end{rem}

\section{Crystalline cohomology}

We continue to let $k$ be an arbitrary field of characteristic $p$. Choose a Cohen ring $C$ for $k$, that is, a complete discrete valuation ring with maximal ideal generated by $p$ and residue field isomorphic to $k$.
Choose a lifting $\sigma\colon C\to C$ of the Frobenius endomorphism of $k$.
If we let $\widehat{C}^\p$ denote the completed (direct) perfection of $C$ along $\sigma$, then $\widehat{C}^\p$ is a Cohen ring for $k^\p$, and is thus canonically isomorphic to $W(k^\p)$.
In particular, $\sigma$ determines an embedding $C\hookrightarrow W(k^\p)$, such that $\sigma$ is the restriction to $C$ of the canonical Witt vector Frobenius on $W(k^\p)$.

Set $C_n:=C/p^n$. If $\{x_i\}_{i\in I}\subset C$ is a lift of a $p$-basis for $k$, then it follows from a more or less direct calculation that the module
\[ \widehat{\Omega}_{C} := \lim_n \Omega_{C_n/\Z} \]
of $p$-adically complete differentials on $C$ is given by $\widehat{\oplus}_{i\in I}\, C\,{\rm d}x_i$. For any smooth and proper $k$-scheme we have the crystalline cohomology groups $\H^q_\cris(X/C)=\lim_n \H^q_\cris(X/C_n)$. By \cite[Summary 7.26]{BO78} these are finitely generated $C$-modules. They are equipped with an integrable connection
\[ \nabla\colon \H^q_\cris(X/C)\to \widehat{\Omega}_C \otimes_C  \H^q_\cris(X/C) \]
by the construction explained in \cite[page 7.16]{BO78}, and a horizontal Frobenius
\[ F\colon \sigma^*\H^q_\cris(X/C)\to \H^q_\cris(X/C)\]
by functoriality. If $X$ is geometrically connected of dimension $d$, then there is a horizontal and Frobenius-equivariant cup-product pairing
\begin{equation} \label{eqn: cup} \H^q_\cris(X/C) \times \H^{2d-q}_\cris(X/C) \to \H^{2d}_\cris(X/C)\cong C(-d), \end{equation}
where the target has the unit connection and twisted Frobenius structure $F=p^d\sigma$.

In the case that $k$ is perfect, Illusie constructs in \cite[Chapitre II, Th\'eor\`eme 1.4]{Ill} a functorial isomorphism
between the crystalline cohomology of $X$ and the (Zariski or \'etale) hypercohomology of the de\thinspace Rham--Witt complex of $X$. This is deduced from a similar isomorphism
\[  \H^q_\cris(X/W_n(k))\isomto \H^q(X,W_n\Omega^\bullet_X) \]
for each $n$, which in fact holds for any smooth variety $X/k$ (not necessarily proper). We will need the following marginal extension of this result. 

\ble \label{lemma: cris dRW} Let $X$ be a smooth variety over a field $k$ of characteristic $p$, which is finitely generated over a perfect subfield $k_0\subset k$. Then, for each $n$, there is an isomorphism
\[\label{eqn: cris dRW} \H^q_\cris(X/W_n(k_0))\isomto \H^q(X,W_n\Omega^\bullet_X) \]
functorial in $X$ and $k$.
\ele

\begin{proof} This is surely well-known to the experts, but we include a proof here for lack of a precise reference. By choosing a separating basis for $k/k_0$ (which exists since $k_0$ is perfect), we may write $k={\rm colim}_{i\in I} R_i$ as a filtered colimit of smooth $k_0$-algebras, with all transition maps $R_i\to R_j$ localisations. Moreover, we may assume that there exists a cofiltered diagram $\{X_i\}_{i\in I}$, with each $X_i$ a smooth $R_i$-scheme, such that $X_j=X_i\times_{R_i} R_j$ whenever $j\geq i$, and $X=X_i\times_{R_i} k$. In particular, $X={\rm lim}_i X_i$.

Then thanks to \cite[Chapitre II, Th\'eor\`eme 1.4]{Ill} there is a natural quasi-isomorphism ${\bf R}\Gamma_\cris(X_i/W_n(k_0))\isomto {\bf R}\Gamma(X_i,W_n\Omega^\bullet_{X_i})$ for each $i$, and we can see that 
\[  {\rm colim}_{i\in I}  {\bf R}\Gamma(X_i,W_n\Omega^\bullet_{X_i}) \isomto {\bf R}\Gamma(X,W_n\Omega^\bullet_{X}) \] by localising on $X$ and using the definition of $W_n\Omega^\bullet$ for smooth $\F_p$-algebras. It is therefore enough to show that the natural map
\[  {\rm colim}_{i\in I}{\bf R}\Gamma_\cris(X_i/W_n(k_0)) \to {\bf R}\Gamma_\cris(X/W_n(k_0)) \]
is an isomorphism. This question is local on $X$, so we can assume that $X={\rm Spec}(A)$ is affine, and so are all $X_i={\rm Spec}(A_i)$. In this case, we lift some fixed $R_{i_0}$ to a smooth $W_n(k_0)$-algebra $\widetilde{R}_{i_0}$, and $A_{i_0}$ to a smooth $\widetilde{R}_{i_0}$-algebra $\widetilde{A}_{i_0}$. Since the transition maps in the system $\{R_i\}_{i\geq i_0}$ are all localisations, we can lift each $R_i$ uniquely to a localisation $\widetilde{R}_i$ of $\widetilde{R}_{i_0}$, and then set $\widetilde{A}_{i}:=\widetilde{A}_{i_0}\otimes_{\widetilde{R}_{i_0}}\widetilde{R}_i$. Thus $\widetilde{A}:={\rm colim}_{i\in I}\widetilde{A}_i$ is a lifting of $A$ to a quasi-smooth $W_n(k_0)$-algebra in the sense of \cite[Chapitre IV, D\'efinition 1.5.1]{Ber74}. We therefore have
\[ {\bf R}\Gamma_\cris({\rm Spec}(A_i)/W_n(k_0)) = {\bf R}\Gamma_\dR(\widetilde{A}_i/W_n(k_0)) \]
for each $i$ by \cite[Chapitre V, Th\'eor\`eme 2.3.2]{Ber74}, as well as a (compatible) comparison morphism
\begin{equation} \label{eqn: comp qs} {\bf R}\Gamma_\cris({\rm Spec}(A)/W_n(k_0)) \to {\bf R}\Gamma_\dR(\widetilde{A}/W_n(k_0)). \end{equation}
Since algebraic de\thinspace Rham cohomology commutes with filtered colimits of rings, it is therefore enough to show that the map \eqref{eqn: comp qs} is an isomorphism.

In the case that $k_0=\F_p$, we can argue as follows. Thanks to \cite[Theorem 1.7 and Proposition 1.4]{Kat91}, it is enough to show that, up to localising on $\widetilde{A}$ (or equivalently $A$), the ring homomorphism $\F_p\to A$ admits a $p$-basis in the sense of \cite[Definition 1.3]{Kat91}. After choosing a separating basis for $k$ over $\F_p$, and local \'etale co-ordinates on $A$, we see that there exists an \'etale map $\F_p(t_1,\ldots,t_n)[X_1,\ldots,X_d]\to A$ for some $n,d$, in which case the elements $t_1,\ldots,t_n,X_1,\ldots,X_n$ clearly form a $p$-basis for $A$ over $\F_p$.

The general case can be handled in exactly the same way, since the proofs of \cite[Theorem 1.7, Proposition 1.4]{Kat91} work equally well with $\Z/p^n\Z$ replaced by $W_n(k_0)$. 
\end{proof}

In particular, in the situation of Lemma \ref{lemma: cris dRW}, the composition of the Teichmuller lift $\O_Y^\times\to W(\O_Y)^\times$ with the map
\begin{align*} {\rm d}\!\log\colon W(\O_Y)^\times&\to W\Omega^1_{Y}\\
u&\mapsto u^{-1}{\rm d}u
\end{align*}
as in \cite[Chapitre I, \S3.23]{Ill} provides a cycle class map
\[ c_1\colon {\rm Pic}(X) \to \H^2(X,W_n\Omega_{X}^\bullet) = \H^2_\cris(X/W_n(k_0)) \to \H^2_\cris(X/C_n) \]
for all $n$. Hence, passing to the limit, there is a cycle class map
\[ c_1 \colon {\rm Pic}(X) \to \H^2_\cris(X/C).  \]
This lands inside the subspace of horizontal sections (because it factors through absolute crystalline cohomology) on which $F=p$ (because $F^*$ acts as multiplication by $p$ on $\Pic(X)$).

More generally, if $S$ is an $\F_p$-scheme, and $f\colon X\to S$ is smooth and proper, then we may consider the relative crystalline cohomology groups ${\bf R}^qf_{\cris *}\O^\cris_{X/\Z_p}$, which are sheaves of $\O^\cris_{S/\Z_p}$-modules on $(S/\Z_p)_\cris$. In the special case when $S={\rm Spec}(k)$, the group $\H^q_\cris(X/C)$ is simply the realisation of ${\bf R}^qf_{\cris *}\O^\cris_{X/\Z_p}$ on the pro-PD-thickening ${\rm Spec}(k)\hookrightarrow {\rm Spf}(C)$.

In general, the ${\bf R}^qf_{\cris *}\O^\cris_{X/\Z_p}$ are not crystals, however, it follows from \cite[Theorem 7.8, Corollary 7.11]{BO78} that they are `derived crystals'. Thus, if ${\bf R}^{q'}f_{\cris *}\O^\cris_{X/\Z_p}$ is flat for each $q' > q$, then ${\bf R}^qf_{\cris *}\O^\cris_{X/\Z_p}$ is indeed a crystal, and this will always be the case if $X/S$ is an abelian scheme or relative K3 surface, since the cohomology of an abelian variety or K3 surface is torsion free. Another application of \cite[Theorem 7.8, Corollary 7.12]{BO78} shows that, under the same flatness hypothesis, the formation of ${\bf R}^qf_{\cris *}\O^\cris_{X/\Z_p}$ commutes with base change. There is also a relative version of the cup-product pairing~\eqref{eqn: cup}. 

If $X/k$ is a K3 surface, then $\H^2_\cris(X/C)$ is free of rank $22$. 
After base changing to $\bar k$, it follows from the construction that the cycle class map $c_1$ factors as
\[ {\rm Pic}(\overline{X}) \to \H^2_\fppf(\overline{X},\Z_p(1)) \isomto\H^2_\cris(\overline{X}/W(\bar k))^{F=p} \]
where the first map comes from the Kummer exact sequence and the second map is an isomorphism \cite[Chapitre II, Theorem 5.14]{Ill}.

If $\xi$ is a primitive polarisation of $X$, of degree $\xi^2=2d$, we write $\Pic(X)^\xi$ for the orthogonal complement to $\xi$, and 
\[ P\H^2_\cris(X/C) := \langle c_1(\xi)\rangle^{\perp} \subset \H^2_\cris(X/C)  \]
for the analogous primitive crystalline cohomology of $(X,\xi)$. This is stable under both $\nabla$ and $F$. We also have the versions $\Pic(\overline{X})^\xi$ and $P\H^2_\cris(\overline{X}/W(\bar k))$ of these groups after base changing to $\bar k$. 

\bde Define the primitive subgroup of $\H^2_\fppf(\overline{X},\Z_p(1))$ by
\begin{align*} P\H^2_\fppf(\overline{X},\Z_p(1)) &:=P\H^2_\cris(\overline{X}/W(\bar k))^{F=p} \\
&= \H^2_\fppf(\overline{X},\Z_p(1)) \cap P\H^2_\cris(\overline{X}/W(\bar k)).
\end{align*}
Similarly, define the primitive subgroup
\[ P\H^2_\fppf(\overline{X},\Z_p(1))^k:= \H^2_\fppf(\overline{X},\Z_p(1))^k\cap P\H^2_\fppf(\overline{X},\Z_p(1)). \]
\ede

\ble \label{lemma: primitive kummer} Suppose that $(X,\xi)$ is a primitively polarised K3 surface with $X(k)\neq\emptyset$ and ${\rm Pic}(X)={\rm Pic}(\overline{X})$. Then \eqref{eqn: kummer2} induces an exact sequence
\[ 0\to\Pic(X)^\xi\otimes\Q_p\to P\H^2_\fppf(\overline{X},\Z_p(1))^k \otimes \Q\to
T_p(\Br(\overline{X})^k)\otimes \Q\to 0 \]
of $\Q_p$-vector spaces.
\ele

\begin{proof}
The only part that is not clear is the claim that the last map is surjective. For this, it is enough to show that ${\rm Pic}(X)+P\H^2_\fppf(\overline{X},\Z_p(1))$ is of finite index in $\H^2_\fppf(\overline{X},\Z_p(1))$.
To see this, take $x\in \H^2_\fppf(\overline{X},\Z_p(1))$.
By Frobenius compatibility of the cup-product pairing, we see that $x\cup \xi\in \H^4(\overline{X}/W(\bar k))^{F=p^2}=\Z_p$. 
Hence $\xi^2\cdot x-(x\cup \xi)\xi$ lies in $P\H^2_\fppf(\overline{X},\Z_p(1))$, and we are done.
\end{proof}

If $A/k$ is an abelian variety, then $\H^1_\cris(A/C)$ is a free $C$-module of rank $2\,\dim A$, and can be identified with the Dieudonn\'e module of the $p$-divisible group $A[p^\infty]$ by \cite[(3.3.7.2)]{BBM82}. In particular, it follows from \cite[Th\'eor\`eme 4.1.1]{BM90} that there is a commutative diagram
\begin{equation} \label{eqn: square}  
\begin{split}
\xymatrix{  {\rm End}(A[p^\infty]) \ar[d] \ar[r]^-{\cong}& {\rm End}_{C}(\H^1_\cris(A/C))^{\nabla=0,F=1} \ar[d] \\ {\rm End}(\overline{A}[p^\infty]) \ar[r]^-{\cong} & {\rm End}_{W(\bar k)}(\H^1_\cris(\overline{A}/W(\bar k))^{F=1}  }
\end{split}
\end{equation}
in which both horizontal arrows are isomorphisms.

\section{The Kuga--Satake construction}

Let $X$ be a K3 surface over a field $k$ of characteristic $p$, together with a primitive polarisation $\xi$ of degree $\xi^2=2d$. We consider the lattice
$$L_d:={\rm E}_8(-1)^{\oplus 2}\oplus {\rm U}^{\oplus 2}\oplus \Z(-2d),$$
where ${\rm U}$ is the hyperbolic plane, and $(m)$ denotes multiplying the bilinear form by $m$. Recall that the pair $(X,\xi)$ is called {\em superspecial} if $c_1(\xi)\in F^2\H^2_\dR(X/k)$. In particular, this implies that $X$ is supersingular.  

\bthe \label{theo: KS} Let $(X,\xi)$ be a non-superspecial. Up to replacing $k$ by a finite separable extension, there exists an abelian variety $A/k$, equipped with an action of the Clifford algebra ${\rm Cl}(L_d)$, and
\begin{itemize}
\item a horizontal, Frobenius-equivariant embedding
\begin{equation} P\H^2_\cris(X/C) \otimes \Q(1) \hookrightarrow 
\End_{\Cl(L_d)}(\H^1_\cris(A/C)) \otimes \Q; \label{embKS2} \end{equation}
\item for all $\ell\neq p$, a $\Ga$-equivariant embedding
\[ P\H^2_\et(X_{k^\s},\Q_\ell(1))\hookrightarrow 
\End_{\Cl(L_d)}(\H^1_\et(A_{k^\s},\Q_\ell)). \label{embKS3} \]
\end{itemize}
\ethe

\begin{proof} If $p\neq 2$, then the integral version of this statement is almost \cite[Theorem 5.17]{MP15}. The missing point is to descend the $p$-adic cohomological comparison from $W(k^\p)$ to $C$, but this is a simple consequence of \cite[Theorem 5.14]{MP15}. Indeed, it follows from \emph{loc. cit.} that, after taking a suitable \'etale cover $T\rightarrow {\sf M}^\circ_{2d}$ of the moduli space of primitively polarised K3 surfaces, there is an embedding
\begin{equation}\label{eqn: KS cris} {\bm P}_\cris \hookrightarrow {\bm H}_\cris \otimes {\bm H}^\vee_\cris(-1) \end{equation}
of $F$-crystals on $T_{\F_p}$, where ${\bm P}_\cris$ is the relative primitive (crystalline) cohomology of the universal K3 surface, and ${\bm H}_\cris$ is the relative (crystalline) cohomology of the Kuga--Satake abelian scheme. The embedding \eqref{embKS2} is then obtained by simply pulling back via ${\rm Spec}(k)\to T$ and inverting $p$. That it lands inside the subspace of ${\rm Cl}(L_d)$-equivariant endomorphisms can be checked after base changing to $k^\p$. In fact, since we only need a comparison for \emph{rational} crystalline cohomology, and $(X,\xi)$ is non-superspecial, it would suffice to apply \cite[Proposition 5.6, Lemma 5.9]{MP15} (see the argument outlined below in the case $p=2$). 

If $p=2$ then we may obtain the required map in rational crystalline cohomology by noting that the proofs of \cite[Proposition 5.6, Lemma 5.9]{MP15} work verbatim in this case. We explain them here for the reader's convenience. Since $(X,\xi)$ is non-superspecial, it corresponds to a smooth point of the moduli space ${\sf M}_{2d,\Z_{(2)}}$, which then lifts to a point $x$ in the domain $\tilde{{\sf M}}_{2d,\Z_{(2)}}^{\rm sm}$ of the $2$-adic Kuga--Satake map $\iota^{\rm KS}_{\Z_{(2)}}$ constructed in \cite[Appendix A]{KMP16}. In particular, we may choose an \'etale   map $T\to \tilde{{\sf M}}^{\rm sm}_{2d,\Z_{(2)}}$, with $T$ a scheme smooth over $\Z_{(2)}$, such that the Kuga--Satake abelian scheme is defined over $T$ and $x$ is in the image of $T$. Let $\bm{P}_{\dR,\Q}(1)$ and $\bm {H}_{\dR,\Q}$ denote the (primitive) relative de\thinspace Rham cohomology of the universal K3 surface and Kuga--Satake abelian scheme over $T_{\Q}$, respectively. Then, thanks to \cite[Proposition 5.6]{MP15}, there is an injection
\[ \alpha_{\dR,\Q}\colon {\bm P}_{\dR,\Q}(1) \hookrightarrow {\bm H}_{\dR,\Q} \otimes {\bm H}^\vee_{\dR,\Q} \]
of modules with connection on $T_{\Q}$ whose pullback via any point $s\colon {\rm Spec}(F)\to T_{\Q}$ comes from an injection
\begin{equation} \label{eqn: motive point} \alpha_s\colon {\bm P}_s(1)\hookrightarrow {\bm H}_s\otimes {\bm H}_s^\vee \end{equation}
of motives for absolute Hodge cycles over $F$. Let $\widehat{T}$ denote the $2$-adic completion of $T$ as a formal scheme over $\Z_2$. Then, via the usual comparison between crystalline and de\thinspace Rham cohomology, we may view (the restriction of) $\alpha_{\dR,\Q}$ (to the rigid analytic variety $\widehat{T}_{\Q_2}$) as an injection
\[ \alpha_{\cris,\Q}\colon {\bm P}_{\cris}\otimes\Q(1)  \hookrightarrow {\bm H}_{\cris} \otimes {\bm H}^\vee_{\cris} \otimes \Q\]
of isocrystals on $T_{\F_2}$. To see that it is an injection of $F$-isocrystals, it is enough to prove compatibility with Frobenius, and it is enough to show this after pullback to any field-valued point $s_0\colon {\rm Spec}(\F) \to T_{\F_2}$. In fact, because the category of isocrystals on a connected scheme, smooth over $\F_2$, is Tannakian, it suffices to do this at one point on each connected component of $T_{\F_2}$, so we can assume that $\F$ is finite. If we take a lift $s\colon {\rm Spec}(W(\F)) \to T$ of $s_0$ to the Witt vectors of $\F$, we therefore need to show that the de\thinspace Rham realisation
\[ \alpha_{s_\Q,\dR} \colon {\bm P}_{s_{\Q},\dR}(1)\hookrightarrow {\bm H}_{s_{\Q},\dR}\otimes {\bm H}_{s_{\Q},\dR}^\vee \]
of \eqref{eqn: motive point} is compatible with Frobenius (via the identifications of both sides with an appropriate crystalline cohomology group). Since $\alpha_{s_\Q,\dR}$ arises from an absolute Hodge cycle on an abelian variety with good reduction, this follows from \cite[Theorem 2.8 and Lemma 2.11]{MP15}. 

The upshot, then, is that we get the $2$-adic, rational analogue of \eqref{eqn: KS cris}, which we may again pullback via the map $x\colon{\rm Spec}(k)\to T$ to obtain \eqref{embKS2}.
\end{proof}

After base changing to $\bar k$, we thus obtain from \eqref{embKS2} an $F$-equivariant embedding
\begin{equation}
P\H^2_\cris(\overline{X}/W(\bar k))\otimes \Q(1) \hookrightarrow 
\End_{\Cl(L_d)}(\H^1_\cris(\overline{A}/W(\bar k))) \otimes \Q. \label{emb1}
\end{equation}
After replacing $k$ by a finite separable extension, we can assume that the natural map
$\End(A)\to\End(\overline{A})$ is an isomorphism (using a theorem of Chow \cite[Theorem 3.19]{Con06}
which says that $\End(A^\s)\to \End(\overline{A})$ is an isomorphism). In this case, let $L(A)\subset\End_{\Cl(L_d)}(A)$ be the subgroup of {\em special endomorphisms}, that is, endomorphisms whose crystalline realisation comes from the cohomology of $X$ via \eqref{emb1}. 

Base change for crystalline cohomology along the extensions $k\to k^\p\to \bar k$ shows that an endomorphism of $A$ is special if and only if its crystalline realisation lies in
 \[ P\H^2_\cris(X/C) \otimes\Q(1) \subset \End_{\Cl(L_d)}(\H^1_\cris(A/C))\otimes \Q, \]
 or equivalently if and only if its crystalline realisation lies in 
 \[ P\H^2_\cris(X^{\p}/W(k^\p)) \otimes\Q(1) \subset \End_{\Cl(L_d)}(\H^1_\cris(A^{\p}/W(k^\p)))\otimes \Q. \]
 As noted in \cite[p.645]{MP15}, the $\ell$-adic realisations of any special endomorphism (for $\ell\neq p$) lies in
 \[ P\H^2_\et(X_{k^\s},\Q_\ell(1))\subset 
\End_{\Cl(L_d)}(\H^1_\et(A_{k^\s},\Q_\ell)),\]
 hence the definition of a special endomorphism given here coincides with that given in \cite[Theorem 5.17 (4)]{MP15}, as well as that given in \cite[p.645]{MP15}.

\bthe \label{theo: KS2} Suppose $(X,\xi)$ is non-superspecial, and $k$ is large enough that $\End(A)=\End(\overline{A})$ and ${\rm Pic}(X)={\rm Pic}(\overline{X})$. Then there is an injection
\[ L(A) \hookrightarrow \Pic(X)^\xi   \]
such that the diagram
\begin{equation} 
\begin{split}
\xymatrix{ P\H^2_\cris(\overline{X}/W(\bar k)) \otimes \Q (1)
 \ar@{^{(}->}[r] &\End_{\Cl(L_d)}(\H^1_\cris(\overline{A}/W(\bar k))\otimes \Q \\
\Pic(X)^\xi\otimes\Q_p \ \ar@{^{(}->}[u] &L(A)\otimes\Q_p\ar@{_{(}->}[l]\ar@{^{(}->}[u]}
\label{emb4}  \end{split}\end{equation}
commutes. 
\ethe

\begin{proof} If $p\neq 2$, this is just the rational version of \cite[Theorem 5.17 (4)]{MP15}, with a stronger hypothesis and weaker conclusion. 

If $p=2$, then the required injection $L(A) \hookrightarrow \Pic(X)^\xi$ is constructed in \cite[Proposition A.15]{KMP16}, and it is shown there that it is compatible with all $\ell$-adic realisations, in the sense that the diagrams
\begin{equation*} \xymatrix{ P\H^2_\et(\overline{X},\Q_\ell(1)) 
 \ar@{^{(}->}[r] &\End_{\Cl(L_d)}(\H^1_\et(\overline{A},\Q_\ell))  \\
\Pic(X)^\xi\otimes\Q_\ell \ \ar@{^{(}->}[u] &L(A)\otimes\Q_\ell\ar@{_{(}->}[l]\ar@{^{(}->}[u]}
\label{emb5}  \end{equation*}
for $\ell\neq2$ all commute.

To see that the $2$-adic analogue \eqref{emb4} of this square commutes, we follow the proof of \cite[Proposition A.15]{KMP16}. Indeed, we can assume that $k={\bar k}$, and we let $W=W({\bar k})$ denote the ring of Witt vectors. Given $f\in L(A)$, it is shown in \cite[Lemma A.13]{KMP16} that there exists a finite extension $K$ of $W[1/2]$ such that the triple $(X,\xi,f)$ lifts to $\O_K$. In other words, there exists a polarised K3 surface $(Y,\xi)$ over $K$ lifting $(X,\xi)$, whose Kuga--Satake variety $B$ is a lift of $A$, in such a way that $f$ lifts to a special endomorphism of $B$. 

 After making a further finite extension of $K$ we will assume that $\Pic(Y)=\Pic(Y_{\overline{K}})$ and ${\rm End}(B)={\rm End}(B_{\overline{K}})$. We let $L'(B)$ denote the set of special endomorphisms of $B$ which reduce modulo $2$ to a special endomorphism of $A$. We then have a diagram
\[ \xymatrix{ \Pic(X)^\xi   & L(A) \ar[l] \\ \Pic(Y)^\xi \ar[u]& L'(B) \ar[u]\ar[l] }\]
which can be seen to commute by, for example, considering the maps into $\ell$-adic cohomology for any prime $\ell\neq 2$. By construction, $f$ is in the image of the right hand map.

 We now consider the following cube:
\[ \xymatrix@C=-25pt{  P\H^2_\cris(X/W) \otimes_{W} K \ar[rr]^-{\alpha_\cris} & & {\rm End}_{{\rm Cl}(L_d)}(\H^1_\cris(A/W)) \otimes_{W} K \\
 & P\H^2_\dR(Y/K)  \ar[rr]^{\alpha_\dR} \ar[ul]  & & {\rm End}_{{\rm Cl}(L_d)}(\H^1_\dR(B/K))\ar[ul] \\
 \Pic(X)^\xi \otimes \Q_2 \ar[uu]\ & & L(A)\otimes\Q_2 \ar[ll]\ar[uu] \\ & \Pic(Y)^\xi \otimes \Q_2  \ar[uu] \ar[ul] & & L'(B)\otimes \Q_2\ar[uu]\ar[ll] \ar[ul]  } \]
By definition, the top face of this cube commutes. Moreover, by compatibility of the cycle class map with the comparison between crystalline and de\thinspace Rham cohomology 
(combine \cite[Corollary 3.7]{BO83} with \cite[Chapitre III, \S2.1]{Gro85}), the left hand face commutes. We have just shown that the bottom face commutes, and it is a straightforward consequence of functoriality of cohomology that the right hand face commutes. To see that the front face commutes, we note that by \cite[Proposition A.15]{KMP16} the diagram
\[  \xymatrix{ P\H^2_\et(Y_{\overline{K}},\Q_2(1)) 
 \ar[r]^-{\alpha_2 } &\End_{\Cl(L_d)}(\H^1_\et(B_{\overline{K}},\Q_2))  \\
\Pic(Y)^\xi\otimes\Q_2 \ \ar[u] &L(B)\otimes\Q_2\ar[l]\ar[u]}  \]
commutes, so we may apply Fontaine's functor ${\bf D}_\dR:=(-\otimes_{\Q_2} B_\dR)^{{\rm Gal}(\overline{K}/K)}$ and use \cite[Lemma 2.11]{MP15} to identify ${\bf D}_\dR(\alpha_2)$ with $\alpha_{\dR}$.

By construction, $f\in L(A)$ is in the image of $L'(B) \to L(A)$. Thus, by the commutativity of the `other' five faces of the above cube, it follows that the images of $f$ under the two possible ways
\[ L(A)\otimes \Q_2 \rightrightarrows \End_{\Cl(L_d)}(\H^1_\cris(A/W))\otimes \Q\]
of following round the square \eqref{emb4} coincide. Since $f$ was arbitrary, we see that these two maps are in fact equal.
\end{proof}

In particular, passing to $F$-invariants in the top row of \eqref{emb4}, we obtain the commutative diagram
\begin{equation}\begin{split} \xymatrix{ P\H^2_\fppf(\overline{X},\Z_p(1)) \otimes \Q 
 \ar@{^{(}->}[r] &\End_{\Cl(L_d)}(\overline{A}[p^\infty])\otimes \Q \\
\Pic(X)^\xi\otimes\Q_p \ \ar@{^{(}->}[u] &L(A)\otimes\Q_p\ar@{_{(}->}[l]\ar@{^{(}->}[u]}
\label{emb3} 
\end{split}\end{equation}

\section{Completion of the proof}

In the previous section we constructed, for any non-superspecial, primitively polarised K3 surface $(X,\xi)$ over a field $k$ of charactersitic $p>0$, the commutative diagram (\ref{emb3}). We need one more preparatory result before we can prove Theorem \ref{theo: main}.

\bpr Suppose that $k$ is finitely generated, and $(X,\xi)$ is non-superspecial. Then the image of the map
\[ P\H^2_\fppf(\overline{X},\Z_p(1))^k  \to  P\H^2_\cris(\overline{X}/W(\bar k)) \]
is contained within $ P\H^2_\cris(X/C)^{\nabla=0,F=p}$. 
\epr

\begin{proof} It is enough to show that the composite map
\[ \H^2_\fppf(X,\Z_p(1))\to \H^2_\fppf(\overline{X},\Z_p(1))\to \H^2_\cris(\overline{X}/W(\bar k))   \]
factors through $\H^2_\cris(X/C)^{\nabla=0}$. In fact, we may work modulo $p^n$ and show that the composite map
\[ \H^2_\fppf(X,\mu_{p^n})\to \H^2_\fppf(\overline{X},\mu_{p^n})\to \H^2_\cris(\overline{X}/W_n(\bar k)) \]
factors through $\H^2_\cris(X/C_n)^{\nabla=0}$, for all $n$.

Let $k_0$ denote the algebraic closure of $\F_p$ inside $k$, and consider the absolute crystalline cohomology group $\H^2_\cris(X/W_n(k_0))$. Then the map
\[ \H^2_\cris(X/W_n(k_0))\to \H^2_\cris(\overline{X}/W_n(\bar k))\]
factors through $\H^2_\cris(X/C_n)^{\nabla=0}$, so it is enough to produce a commutative diagram
\[ \xymatrix{\H^2_\fppf(X,\mu_{p^n}) \ar[r]\ar[d] & \H^2_\cris(X/W_n(k_0))\ar[d] \\ \H^2_\fppf(\overline{X},\mu_{p^n}) \ar[r]& \H^2_\cris(\overline{X}/W_n(\bar k)). } \]
To do this, we use \cite[Chapitre II, Th\'eor\`eme 1.4]{Ill} and Lemma \ref{lemma: cris dRW} to get identifications
\[ \H^2_\cris(X/W_n(k_0)) \isomto \H^2(X,W_n\Omega^\bullet_X), \quad
\H^2_\cris(\overline{X}/W_n(\bar k)) \isomto \H^2(\overline{X},W_n\Omega^\bullet_{\overline{X}})\]
(note here the use of the hypothesis that $k$ is finitely generated). In fact, since each $W_n\Omega^i$ is a coherent $W_n(\O)$-module, we may replace the Zariski topology here by the \'etale topology. Moreover, by pushing forward from the flat to the \'etale site, we obtain identifications
\begin{align*} \H^2_\fppf(X,\mu_{p^n}) &\isomto \H^1_\et(X,\O_{X}^\times/(\O_{X}^\times)^{p^n}) \\
\H^2_\fppf(\overline{X},\mu_{p^n}) &\isomto \H^1_\et(\overline{X},\O_{\overline{X}}^\times/(\O_{\overline{X}}^\times)^{p^n})
\end{align*}
via which the map $\H^2_\fppf(\overline{X},\mu_{p^n})\to \H^2_\et(\overline{X},W_n\Omega^\bullet_{\overline{X}})$ is induced by 
\[ \O_{\overline{X}}^\times/(\O_{\overline{X}}^\times)^{p^n} \overset{{\rm d}\!\log}{\longrightarrow} W_n\Omega^1_{\overline{X}}.  \]
It is now enough to observe that there is an analogous map
$$\O_{X}^\times/(\O_{X}^\times)^{p^n} \overset{{\rm d}\!\log}{\longrightarrow} W_n\Omega^1_{X}$$
on $X_\et$, and that the commutativity of the diagram
\[ \xymatrix{  \H^1_\et(X,\O_{X}^\times/(\O_{X}^\times)^{p^n}) \ar[r]\ar[d] &\H^2_\et(X,W_n\Omega^\bullet_{X}) \ar[d] \\   \H^1_\et(\overline{X},\O_{\overline{X}}^\times/(\O_{\overline{X}}^\times)^{p^n}) \ar[r] & \H^2_\et(\overline{X},W_n\Omega^\bullet_{\overline{X}}) } \]
is clear.
\end{proof}

\begin{proof}[Proof of Theorem \ref{theo: main}]
We may assume that $X$ is of finite height, and admits a primitive polarisation $\xi$. Set $2d=\xi^2$. Extending $k$ if necessary, we may assume that $X(k)\neq\emptyset$, that ${\rm Pic}(X)={\rm Pic}(\overline{X})$, that the Kuga--Satake variety $A$ is defined over $k$, and that ${\rm End}(A)={\rm End}(\overline{A})$. Thanks to de Jong's $p$-adic Tate conjecture for abelian varieties \cite[Theorem 2.6]{dJ98} we can identify
\[ {\rm End}_{\nabla,F}(\H^1_\cris(A/C))\otimes\Q= \End(A[p^\infty])\otimes \Q=\End(A)\otimes \Q_p.\]
We therefore have the following commutative diagram:
\begin{equation} 
\begin{split}\label{eqn: pent} \xymatrix@C=1.5pt{ &P\H^2_\cris(X/C)^{\nabla=0,F=p}\ar@{^{(}->}[dr] \otimes \Q  &  \\ P\H^2_\fppf(\overline{X},\Z_p(1))^k \otimes \Q 
 \ar@{^{(}->}[ur] & & \End(A) \otimes \Q_p \\
\Pic(X)^\xi\otimes\Q_p \ \ar@{^{(}->}[u] & & L(A)\otimes\Q_p\ar@{_{(}->}[ll]\ar@{^{(}->}[u]} 
\end{split} \end{equation}
We claim that \eqref{eqn: pent} induces an isomorphism
\[  L(A)\otimes \Q_p \isomto P\H^2_\cris(X/C)^{\nabla=0,F=p} \otimes \Q. \]
To prove this, let $k_0$ denote the algebraic closure of $\F_p$ inside $k$. By spreading out the morphism from ${\rm Spec}(k)$ to (a suitable \'etale cover of) the moduli space $\tilde{{\sf M}}^{\rm sm}_{2d,\F_p}$, we can choose a smooth, geometrically connected $k_0$-variety $U$, with function field $k$, such that everything extends over $U$. Concretely, this gives a relative K3 surface $f\colon \X\to U$, a primitive polarisation $\xi\in {\bf Pic}_{\X/U}(U)$ of degree $2d$, an abelian scheme $\pi\colon \mathcal{A}\to U$, and an injection 
\[ {\bm P}{\mathbf R}^2f_{\cris*}\O_{\mathcal{X}/\Z_p}^\cris \otimes \Q(1) \hookrightarrow \End_{\O_{U/\Z_p}^\cris}({\mathbf R}^1\pi_{\cris*}\O_{\mathcal{A}/\Z_p}^\cris \otimes \Q) \]
of $F$-isocrystals on $U/W(k_0)$, which recover $X$, $\xi$, $A$, and the map from Theorem \ref{theo: KS}, respectively, after base changing from $U$ to ${\rm Spec}(k)$. Moreover, base changing the whole situation to any \emph{closed} point $s$ of $U$ recovers the Kuga--Satake correspondence for the (non-superspecial) polarised K3 surface $(\mathcal{X}_s,\xi)$ over the residue field $k_0(s)$. In particular, we have the analogue
\begin{equation} 
\begin{split}
\label{eqn: square2} \xymatrix{  P\H^2_\cris(\mathcal{X}_{s_0}/W(k_0))^{F=p} \otimes \Q \ar@{^{(}->}[r] & {\rm End}(\mathcal{A}_{s_0}) \otimes \Q_p \\ {\rm Pic}(\mathcal{X}_{s_0})^\xi \otimes \Q_p\ar@{^{(}->}[u]& L(\mathcal{A}_{s_0})\otimes \Q_p \ar@{_{(}->}[l]\ar@{^{(}->}[u]   } 
\end{split}\end{equation}
of the diagram \eqref{eqn: pent} over $s_0$. 

After shrinking $U$ if necessary, we may assume that  ${\rm End}(A)={\rm End}_{U}(\mathcal{A})$, and after extending $k_0$ we may assume that there exists a point $s_0\in U(k_0)$ such that ${\rm End}(\mathcal{A}_{s_0})={\rm End}(\mathcal{A}_{\bar{s}_0})$ for any geometric point $\bar{s}_0$ above $s_0$. Every element of
\[ {\rm End}_C(\H^1_\cris(A/C))^{\nabla=0,F=1}\otimes \Q={\rm End}(A)\otimes \Q_p={\rm End}_{U}(\mathcal{A})\otimes \Q_p\]
thus extends to a horizontal and Frobenius equivariant global section of $\End_{\O_{U/\Z_p}^\cris}({\mathbf R}^1\pi_{\cris*}\O_{\mathcal{A}/\Z_p}^\cris \otimes \Q)$. It follows that every horizontal section of $P\H^2_\cris(X/C)$ on which $F=p$ extends to one of ${\bm P}{\mathbf R}^2f_{\cris*}\O_{\mathcal{X}/\Z_p}^\cris$. We therefore obtain compatible (injective!) specialisation maps from the terms in the diagram \eqref{eqn: pent} to the corresponding terms in the diagram \eqref{eqn: square2}, and we can view everything in sight as living  inside ${\rm End}(\mathcal{A}_{s_0})\otimes \Q_p$. It follows from \cite[Theorem 6.4]{MP15} that, for $p\neq 2$,
\begin{equation}\label{eqn: key p}  P\H^2_\cris(\mathcal{X}_{s_0}/W(k_0))^{F=p} \otimes \Q = L(\mathcal{A}_{s_0}) \otimes \Q_p \end{equation}
as subobjects of ${\rm End}(\mathcal{A}_{s_0})\otimes \Q_p$. That the same holds for $p=2$ is essentially observed during the proof of \cite[Theorem A.8]{KMP16}. Indeed, it is shown there that 
\begin{equation} \label{eqn: key l} P\H^2_\et(\mathcal{X}_{\bar{s}_0},\Q_\ell(1))^{{\rm Gal}(\bar{s}_0/s_0)} = L(\mathcal{A}_{s_0}) \otimes \Q_\ell \end{equation} 
for any prime $\ell\neq 2$. Combining $\ell$-independence \cite[Theorem 1]{KM74} with Frobenius semisimplicity for abelian varieties over finite fields shows that the 
left hand sides of \eqref{eqn: key p} and \eqref{eqn: key l} have the same dimension, and combining these then gives the equality in \eqref{eqn: key p}. 

Since $U$ is connected, a horizontal section of $\End_{\O_{U/\Z_p}^\cris}({\mathbf R}^1\pi_{\cris*}\O_{\mathcal{A}/\Z_p}^\cris \otimes \Q)$ lies in ${\bm P}{\mathbf R}^2f_{\cris*}\O_{\mathcal{X}/\Z_p}^\cris \otimes \Q(1)$ if and only if it does so at $s_0$. Thus we see that
\[ L(A) ={\rm End}(A)\cap L(\mathcal{A}_{s_0})\]
inside ${\rm End}(\mathcal{A}_{s_0})$. Finally, we deduce that
\[ P\H^2_\cris(X/C)^{\nabla=0,F=p} \otimes \Q \subset  P\H^2_\cris(\mathcal{X}_{s_0}/W(k_0))^{F=p} \otimes \Q = L(\mathcal{A}_{s_0}) \otimes \Q_p \]
is contained in 
\[ {\rm End}(A) \otimes \Q_p  \cap L(\mathcal{A}_{s_0}) \otimes \Q_p = ({\rm End}(A)\cap L(\mathcal{A}_{s_0})) \otimes \Q_p = L(A)\otimes \Q_p \]
as claimed.

It now follows that the lower horizontal and left hand vertical maps of \eqref{eqn: pent} are isomorphisms. Lemma \ref{lemma: primitive kummer} now shows that $T_p(\Br(\overline{X})^k)\otimes \Q=0$, and hence that $T_p(\Br(\overline{X})^k)=0$, completing the proof.
\end{proof}

\providecommand{\bysame}{\leavevmode\hbox to3em{\hrulefill}\thinspace}

\end{document}